\newtheorem{theorem}{Theorem}
\newtheorem{proposition}[theorem]{Proposition}
\newenvironment{proof}[1][Proof]{\noindent\textbf{#1.} }{\ \rule{0.5em}{0.5em}}
\begin{document}

\title{{\Large \textbf{Type II\ hidden symmetries for the homogeneous heat
equation in some general classes of Riemannian spaces}}}
\author{Michael Tsamparlis\thanks{%
Email: mtsampa@phys.uoa.gr} \ and Andronikos Paliathanasis\thanks{%
Email: anpaliat@phys.uoa.gr} \\
{\small \textit{Faculty of Physics, Department of Astrophysics - Astronomy -
Mechanics,}}\\
{\small \textit{\ University of Athens, Panepistemiopolis, Athens 157 83,
GREECE}}}
\date{}
\maketitle

\begin{abstract}
We study the reduction of the heat equation in Riemannian spaces which admit
a gradient Killing vector, a gradient homothetic vector and in Petrov Type
D,N,II and Type III space-times. In each reduction we identify the source of
the Type II hidden symmetries. More specifically we find that a) If we
reduce the heat equation by the symmetries generated by the gradient KV the
reduced equation is a linear heat equation in the nondecomposable space. b)
If we reduce the heat equation via the symmetries generated by the gradient
HV the reduced equation is a Laplace equation for an appropriate metric. In
this case the Type II hidden symmetries are generated from the proper CKVs.
c) In the Petrov spacetimes the reduction of the heat equation by the
symmetry generated from the nongradient HV gives PDEs which inherit the Lie
symmetries hence no Type II hidden symmetries appear. We apply the general
results to cases in which the initial metric is specified. We consider the
case that the irreducible part of the decomposed space is a space of
constant nonvanishing curvature and the case of the spatially flat
Friedmann-Robertson-Walker space time used in Cosmology. In each case we
give explicitly the Type II hidden symmetries provided they exist.
\end{abstract}

Keywords: Lie symmetries, Type II hidden symmetries, Heat Equation.

PACS - numbers: 02.20.Sv, 02.30.Jr, 02.40.Ky

\section{Introduction}

Lie point symmetries assist us in the simplification of differential
equations (DE) by means of reduction. The reduction is different for
ordinary differential equations (ODEs) and partial differential equations
(PDEs). In the case of ODEs the use of a Lie point \ symmetry reduces the
order of ODE by one while in the case of PDEs the reduction by a Lie point
symmetry reduces by one the number of independent and dependent variables,
but not the order of the PDE. A common characteristic in the reduction of
both cases is that the Lie point symmetry which is used for the reduction is
not admitted as such by the reduced DE, it is "lost".

It has been found that the reduced equation is possible to admit more Lie
point symmetries than the ones of the original equation. These new Lie point
symmetries have been termed Type II hidden symmetries. Also \ if one works
in the reverse way and either increase the order of an ODE\ or increase the
number of independent and dependent variables of a PDE then it is possible
that the new (the `augmented') DE admits new point symmetries not admitted
by the original DE. This type of Lie point symmetries are called Type I\
hidden symmetries.

The Type I\ and Type II hidden symmetries have been discussed extensively
during the recent years by B Abraham - Shrauner, K S Govinder, P G\ L Leach
and others (see e.g. \cite{Abraham Guo 1994,Abraham Govinder Leach
1995,Leach Govinder Abraham 1999,Abraham Govinder 2008,ASG,Abraham Govinder
Arrige 2006,Abraham 2007,Moitsheko (2004)}). In the following we shall
consider mainly the Type II\ hidden symmetries as they are the ones which
may be used to reduce the reduced DE\ further.

The origin of Type II hidden symmetries is different for the ODEs and the
PDEs although recently it has been shown \cite{Leach Govinder Andriopoulos
2012} that they are nearly the same. For the case of ODEs the inheritance or
not of a Lie point symmetry, the $X_{2}$ say, by the reduced ODE\ depends on
the commutator of that symmetry with the symmetry used for the reduction,
the $X_{1}$ \ say. For example if only two Lie point symmetries $X_{1},X_{2}$
are admitted by the original equation and the commutator $%
[X_{1},X_{2}]=cX_{2}$ where $c$ may be zero,\ then reduction by $X_{1}$
results in $X_{2}$ being a nonlocal symmetry for the reduced ODE while
reduction by $X_{2}$ results in $X_{1}$ being an inherited Lie symmetry of
the reduced ODE. In the reduction by $X_{1}$ the symmetry $X_{2}$ is a Type
I\ hidden symmetry of the original equation relative to the reduced
equation. In the case of more than two Lie point symmetries the situation is
the same if the Lie bracket gives a third Lie point symmetry, the $X_{3}$
say. Then the point like nature of a symmetry is preserved only if reduction
is performed using the normal subgroup \ and $X_{3}$ has a certain
expression \cite{Govinder2001}.

The above scenario is transferred to PDEs as follows. The reduced PDE loses
the symmetry used to reduce the number of variables and it may lose other
Lie point symmetries depending on the structure of the associated Lie
algebra, that is if the admitted subgroup is normal or not \cite%
{Govinder2001}. Again if $X_{1},X_{2}$ are Lie point symmetries of the
original PDE with commutator $[X_{1},X_{2}]=cX_{2}$ where $c$ may be zero,\
then reduction by $X_{2}$ results in $X_{1}$ being a point symmetry of the
reduced PDE while reduction by $X_{1}$ results to an expression which has no
relevance for the PDE \cite{Govinder2001}.

In addition to that scenario, Abraham - Shrauner and Govinder have proposed
a new potential source for the Type II hidden symmetries \cite{ASG,Govinder
Abraham 2009} based on the observation that different PDEs with the same
variables which admit different Lie point symmetry algebras may reduce to
the same target PDE. Based on that observation they propose that the target
PDE inherits Lie point symmetries from all reduced PDEs, which explains why
some of the new symmetries are not admitted by the specific PDE\ used for
the reduction. In this context arises the problem of identifying the set of
all PDEs which lead to the same reduced PDE after reduction by a Lie point
symmetry. In a recent paper \cite{Leach Govinder Andriopoulos 2012} it has
been shown that this is also the case with the ODEs. That is, it is shown
that different differential equations which can be reduced to the same
equation provide point sources for each of the Lie point symmetries of the
reduced equation even though any particular of the higher order equations
may not provide the full complement of Lie point symmetries. Therefore
concerning the ODEs the Lie point symmetries of the reduced equation can be
viewed as having two sources. Firstly the point and nonlocal symmetries of 
\emph{a given} higher order equation and secondly the point symmetries of 
\emph{a variety} of higher order ODEs. Finally in a newer paper \cite%
{Govinder Abraham 2009} it has been shown by a counter example that Type II\
hidden symmetries for PDEs can have a nonpoint origin, i.e. they arise from
contact symmetries or even nonlocal symmetries of the original equation.
Other approaches may be found in \cite{Gandarias 2008,Govinder Abraham2008}.

In the present paper we study the reduction and the consequent existence of
Type II\ hidden symmetries \ of the heat equation in certain classes of
Riemannian spaces.

In a general Riemannian space the heat equation has three Lie point
symmetries which give trivial reduced forms. This implies that if we want to
find `sound' reductions of the heat equation we have to consider Riemannian
spaces which admit some type of symmetry(ies) of the metric (these
symmetries are not Lie symmetries and are called collineations). Indeed it
has been shown \cite{PaliaMT JGP PDFs 2012,Bozkov}, that the Lie symmetries
of the heat and the Poisson equation in a Riemannian space are generated
from the elements of the homothetic algebra and the conformal algebra of the
space respectively. Therefore one expects that in spaces with a nonvoid
homothetic algebra there will be Lie symmetry vectors which will allow for
the reduction of the heat equation and the possibility of the existence of
Type II\ hidden symmetries.

The structure of the paper is as follows. In section \ref{Riemannian spaces
and collineations} we recall results concerning the Lie point symmetry
conditions for a general type of second order PDE. For the convenience of
the reader we also recall two theorems which relate the Lie point symmetries
of the heat equation with the homothetic algebra of the space. The next
sections contain the new results. In section \ref{The heat equation in an
1+n decomposable space} we consider a decomposable space - that is a
Riemannian space which admits a gradient Killing vector (KV). In section \ref%
{The heat equation in a (n+1) space} we do the same for a space which admits
a gradient Homothetic vector (HV) and show that reduction by the Lie
symmetries due to this vector also give rise to Type II\ hidden symmetries.
In section \ref{Applications} we consider the special cases of \ the
previous section, that is, a decomposable space whose nondecomposable part
is a maximally symmetric space of non vanishing curvature and the spatially
flat Friedmann Robertson Walker (FRW) space time used in Cosmology.

In section \ref{The Heat equation in spaces which admit a nongradient HV} we
consider the algebraically special vacuum solutions of Einstein's equations
known as Petrov type N, II, III and D. These space-times belong to
space-times which admit a nongradient HV acting simply transitive \cite%
{Steele 1991 (b)}. In these space-times we consider the reduction of the
heat equation using the Lie point symmetries resulting from the homothetic
vector and look for Type II hidden symmetries. Finally in section \ref%
{Conclusion} we draw our conclusions.

\section{Lie point symmetries and collineations in Riemannian spaces}

\label{Riemannian spaces and collineations}

In a general Riemannian space with metric $g_{ij}$ the heat conduction
equation with flux is%
\begin{equation}
\Delta u-u_{t}=q  \label{WH.0}
\end{equation}%
where $\Delta $ is the Laplace operator $\Delta =\frac{1}{\sqrt{g}}\frac{%
\partial }{\partial x^{i}}\left( \sqrt{g}g^{ij}\frac{\partial }{\partial
x^{j}}\right) $ and $q=q(t,x,u)$. Equation (\ref{WH.0}) can also be written 
\begin{equation}
g^{ij}u_{ij}-\Gamma ^{i}u_{i}-u_{t}=q  \label{WH.001}
\end{equation}%
where $\Gamma ^{i}=\Gamma _{jk}^{i}g^{jk}$ and $\Gamma _{jk}^{i}$ are the
Christofell Symbols of the metric $g_{ij}$.

Equation (\ref{WH.001}) admits the Lie point symmetries 
\begin{equation}
X_{t}=\partial _{t}~\ ,~X_{u}=u\partial _{u}~,~X_{b}=b\left( t,x\right)
\partial _{u}  \label{WH.001a}
\end{equation}%
where $b\left( t,x\right) $ is a solution of the heat equation. These
symmetries are too general to provide sound reductions and consequently
reduced PDEs which can give Type II\ hidden symmetries. However it has been
shown \cite{PaliaMT JGP PDFs 2012,Bozkov} that there is a close relation
between the Lie point symmetries of the heat equation (and its special form
the Poisson equation) with the collineations of the metric. Specifically it
has been shown that the Lie point symmetries of the heat equation are
generated from the HV and the KVs of $g_{ij}$ whereas the Lie point
symmetries of the Poisson equation form the conformal algebra of the metric.
This implies that if we want to have new Lie point symmetries which will
allow for sound reductions of the heat equation eqn (\ref{WH.001}) we have
to restrict our considerations to spaces which admit a homothetic algebra.
Our intention is to keep the discussion as general as possible therefore we
consider spaces in which the metric $g_{ij}$ can be written in generic form.
The spaces we shall consider are:

a. Spaces which admit a gradient KV.

b. Spaces which admit a gradient HV.

The generic form of the metric for these types of spaces has as follows:

\begin{enumerate}
\item If an $1+n-$dimensional Riemannian space admits a gradient KV, the $%
S^{i}=\partial _{x}~\left( S=x\right) $ say, then the space is decomposable
along $\partial _{x}$ and the metric is written as (see e.g. \cite{TNA}) 
\begin{equation*}
ds^{2}=dx^{2}+h_{AB}y^{A}y^{B}~,~h_{AB}=h_{AB}\left( y^{C}\right)
\end{equation*}

\item If an $n-$dimensional Riemannian space admits a gradient HV, the $%
H^{i}=r\partial _{r}~\left( H=\frac{1}{2}r^{2}\right) ,~\psi _{H}=1$ say,
then the metric can be written in the generic form \cite{Tupper}%
\begin{equation*}
ds^{2}=dr^{2}+r^{2}h_{AB}dy^{A}dy^{B}~,~~h_{AB}=h_{AB}\left( y^{C}\right)
\end{equation*}%
where in both cases $h_{AB}$ is the metric of the $n-$ dimensional subspace
with coordinates $\{x^{A}\}$ and the indices $A,B,C=1,...,n$ $.$

The Riemannian spaces which admit nongradient proper HV do not have a
generic form for their metric. However the \ spaces on which the HV\ acts
simply transitively are a few and are given together with their homothetic
algebra in \cite{Steele 1991 (b)}. A\ special class of these spaces are the
algebraically special vacuum space-times known as Petrov types N, II , III,
D whose metric is:

\begin{enumerate}
\item Petrov Type N \qquad 
\begin{equation*}
ds^{2}=dx^{2}+dy^{2}+2d\rho dv-2\ln \left( x^{2}+y^{2}\right) d\rho ^{2}
\end{equation*}

\item Petrov Type D%
\begin{equation*}
ds^{2}=-dx^{2}+x^{-\frac{2}{3}}dy^{2}-x^{\frac{4}{3}}\left( d\rho
^{2}+dz^{2}\right)
\end{equation*}

\item Petrov Type II%
\begin{equation*}
ds^{2}=\rho ^{-\frac{1}{2}}\left( d\rho ^{2}+dz^{2}\right) -2\rho dxdy+\rho
\ln \rho ~dy^{2}
\end{equation*}

\item Petrov Type III%
\begin{equation*}
ds^{2}=2d\rho dv+\frac{3}{2}xd\rho ^{2}+\frac{v^{2}}{x^{3}}\left(
dx^{2}+dy^{2}\right)
\end{equation*}
\end{enumerate}
\end{enumerate}

In what follows all spaces are of dimension $n\succeq 2$. The case $n=1$
although relatively trivial for our approach in general it is not so and has
been studied for example in \cite{Clarkson Mansfield 1993,Ivanova 2008}.

\subsection{Collineations}

For the convenience of the reader and the completeness of the paper we
present some basic results concerning the conformal algebra. In the
following $L_{\xi }$ denotes Lie derivative with respect to the vector field 
$\xi ^{i}$.

A vector field $\xi ^{i}$ is Conformal killing vector (CKV) of a metric $%
g_{ij}$ if $L_{\xi }g_{ij}=2\psi g_{ij}$. If $\psi =0$ then $\xi ^{i}$ is a
KV and if $\psi =1$, $\xi ^{i}$ is a HV. Two metrics $g_{ij},~\bar{g}_{ij}$
are conformally related if $\bar{g}_{ij}=N^{2}g_{ij}$ where the function ~$%
N^{2}$ is the conformal factor.

If $\xi ^{i}$ is a CKV\ of the metric $\bar{g}_{ij}$ so that $L_{\xi }\bar{g}%
_{ij}=2\bar{\psi}\bar{g}_{ij}$ then $\xi ^{i}$ is also a CKV of the metric $%
g_{ij}$, that is $L_{\xi }g_{ij}=2\psi g_{ij},$ where the conformal factor 
\begin{equation*}
\psi =\bar{\psi}N^{2}-NN_{,i}\xi ^{i}.
\end{equation*}%
This means that two conformally related metrics have the same conformal
algebra but with different conformal factors hence subalgebras. This is an
important observation which shall be useful in the following sections.

\subsection{Lie point symmetry conditions}

\label{Lie point symmetry conditions}

In \cite{PaliaMT JGP PDFs 2012} it has been shown that the Lie point
symmetry conditions for the PDE of the form%
\begin{equation}
A^{ij}u_{ij}-B^{i}(x,u)u_{i}-f(x,u)=0  \label{GPE.41}
\end{equation}%
are as follows: 
\begin{equation}
A^{ij}(a_{ij}u+b_{ij})-(a_{i}u+b_{i})B^{i}-\xi
^{k}f_{,k}-auf_{,u}-bf_{,u}+\lambda f=0  \label{GPE.42}
\end{equation}%
\begin{equation}
A^{ij}\xi _{,ij}^{k}-2A^{ik}a_{,i}+aB^{k}+auB_{,u}^{k}-\xi
_{,i}^{k}B^{i}+\xi ^{i}B_{,i}^{k}-\lambda B^{k}+bB_{,u}^{k}=0  \label{GPE.43}
\end{equation}%
\begin{equation}
L_{\xi ^{i}\partial _{i}}A^{ij}=(\lambda -a)A^{ij}-\eta A^{ij}{}_{,u}
\label{GPE.44}
\end{equation}%
\begin{align}
\eta & =a(x^{i})u+b(x^{i})  \label{GPE.45} \\
\xi ^{k}& =\xi ^{k}(x^{i})  \label{GPE.46}
\end{align}%
where the generator of the Lie point symmetry is $\mathbf{X}=\xi ^{i}\left(
x^{i}\right) \partial _{i}+\left( a\left( x^{k}\right) u+b\left(
x^{i}\right) \right) \partial _{u}$ and $b(x^{i})$ is a solution of the PDE.

An immediate conclusion is that if $A_{,u}^{ij}=0$ then from (\ref{GPE.44})
follows that the Lie symmetries are generated from the CKVs of the `metric' $%
A^{ij}.$

In the same paper \cite{PaliaMT JGP PDFs 2012} it has also been shown the
following result concerning the Lie point symmetries of the inhomogeneous
and the homogeneous heat equation.

\begin{theorem}
\label{The Lie of the heat equation with flux}The Lie point symmetries of
the heat equation with flux i.e. 
\begin{equation}
g^{ij}u_{ij}-\Gamma ^{i}u_{i}-u_{t}=q\left( t,x,u\right)  \label{WH.02A}
\end{equation}%
in an $n$- dimensional Riemannian space with metric $g_{ij}$ are constructed
form the homothetic algebra of the metric as follows:

a. $Y^{i}$ is a HV/KV.\newline
The Lie point symmetry is 
\begin{equation}
\mathbf{X}=\left( 2c_{2}\psi t+c_{1}\right) \partial _{t}+c_{2}Y^{i}\partial
_{i}+\left( a\left( t\right) u+b\left( t,x\right) \right) \partial _{u}
\label{HEF.19}
\end{equation}%
where the functions $a(t),b\left( t,x^{k}\right) ,q\left( t,x^{k},u\right) $
satisfy the constraint equation%
\begin{equation}
-a_{t}u+H\left( b\right) -\left( au+b\right) q_{,u}+aq-\left( 2\psi
c_{2}qt+c_{1}q\right) _{t}-c_{2}q_{,i}Y^{i}=0.  \label{HEF.20}
\end{equation}

b. $Y^{i}=S^{,i}$ is a gradient HV/KV.\newline
The Lie point symmetry is 
\begin{equation}
\mathbf{X}=\left( 2\psi \int Tdt\right) \partial _{t}+TS^{,i}\partial
_{i}-\left( \left( \frac{1}{2}T_{,t}S-F\left( t\right) \right) u-b\left(
t,x\right) \right) \partial _{u}  \label{HEF.21}
\end{equation}%
where the functions $F(t),T(t),b\left( t,x^{k}\right) ,q\left(
t,x^{k},u\right) $ satisfy the constraint equation%
\begin{align}
0& =\left( -\frac{1}{2}T_{,t}\psi +\frac{1}{2}T_{,tt}S-F_{,t}\right)
u+H\left( b\right) +  \notag \\
& -\left( \left( -\frac{1}{2}T_{,t}S+F\right) u+b\right) q_{,u}+\left( -%
\frac{1}{2}T_{,t}S+F\right) q-\left( 2\psi q\int Tdt\right)
_{t}-Tq_{,i}S^{,i}.  \label{HEF.22}
\end{align}
\end{theorem}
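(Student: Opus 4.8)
The plan is to realise the heat equation (\ref{WH.02A}) as a special instance of the general second order PDE (\ref{GPE.41}) on the \emph{augmented} space with coordinates $(t,x^{i})$, and then to feed the identifications into the symmetry conditions (\ref{GPE.42})--(\ref{GPE.44}). Writing the second order operator of (\ref{WH.02A}) in the augmented variables, the only second derivatives that appear are the spatial ones $g^{ij}u_{ij}$; there is no $u_{tt}$ and no mixed $u_{ti}$ term. Hence the ``metric'' $A^{\mu\nu}$ of (\ref{GPE.41}) is the \emph{degenerate} object whose spatial block is $g^{ij}$ and whose time row and column vanish, while $B^{i}=\Gamma^{i}$, $B^{t}=1$ and $f=q$. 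Crucially $A^{\mu\nu}_{,u}=0$, so by the remark following (\ref{GPE.46}) the generator is built from the conformal algebra of $A^{\mu\nu}$.

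First I would extract the geometric content of (\ref{GPE.44}). With $A^{\mu\nu}_{,u}=0$ it is the conformal Killing equation $L_{\xi}A^{\mu\nu}=(\lambda-a)A^{\mu\nu}$ for the degenerate metric. Splitting it into blocks and using that $g_{ij}$ is $t$--independent, the time--space component collapses to $g^{ik}\partial_{k}\xi^{t}=0$, forcing $\xi^{t}=\xi^{t}(t)$, while the purely spatial block says that, at each fixed $t$, the spatial part $\xi^{i}$ is a CKV of $g_{ij}$ with conformal factor proportional to $a-\lambda$.

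The decisive step is to show that this CKV is in fact \emph{homothetic}, and this is where the evolution term enters. Taking the time component ($k=t$) of (\ref{GPE.43}) and using $\xi^{t}=\xi^{t}(t)$ together with $B^{t}=1$, one finds that all second--derivative and $B^{i}$--transport contributions drop out and only the algebraic relation $\lambda=a-\xi^{t}_{,t}$ survives. Substituting back, the conformal factor of $\xi^{i}$ equals $\tfrac12\xi^{t}_{,t}$, which depends on $t$ only and is therefore \emph{spatially constant}. A CKV with spatially constant factor is a homothety (a KV when the factor vanishes), so $\xi^{i}=c_{2}Y^{i}$ with $Y^{i}$ a HV/KV of homothetic factor $\psi$; matching $\tfrac12\xi^{t}_{,t}=c_{2}\psi$ and integrating gives $\xi^{t}=2c_{2}\psi t+c_{1}$. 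This is exactly the reason the heat equation sees only the homothetic algebra whereas the Laplace/Poisson equation, lacking the $u_{t}$ term, retains the full conformal algebra. The remaining spatial components of (\ref{GPE.43}) then fix the structure of $\eta=au+b$, yielding $a=a(t)$ and producing the generator (\ref{HEF.19}); finally the $f$--condition (\ref{GPE.42}) with $f=q$ reorganises into the single constraint (\ref{HEF.20}) on $a(t),b(t,x),q(t,x,u)$.

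For part (b) I would specialise $Y^{i}=S^{,i}$ to a gradient HV/KV and exploit the Hessian identity $S_{;ij}=\psi g_{ij}$ that characterises a gradient homothety. The new feature is that the spatial coefficient may now be taken $t$--dependent, $\xi^{i}=T(t)S^{,i}$: when $T$ varies in $t$ the symmetry conditions generate an extra term proportional to $S$, and the gradient/Hessian relation is precisely what lets this term be reabsorbed into the $\partial_{u}$ part through the $\tfrac12 T_{,t}S$ contribution, giving the generator (\ref{HEF.21}). Re--running the $f$--condition (\ref{GPE.42}) with this refined ansatz then collapses to the constraint (\ref{HEF.22}). The hard part will be the crux step above --- proving that the conformal factor is forced to be constant, i.e. the homothetic reduction --- together with, in case (b), the bookkeeping that shows the $T_{,t}$--terms close up via $S_{;ij}=\psi g_{ij}$ rather than obstructing the existence of the symmetry.
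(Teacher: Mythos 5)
The paper itself contains no proof of this theorem: it is imported verbatim from \cite{PaliaMT JGP PDFs 2012} (``In the same paper ... it has also been shown the following result''), so the only fair comparison is against the derivation in that reference, which proceeds exactly as you propose — specialize the recalled symmetry conditions (\ref{GPE.42})--(\ref{GPE.46}) to the degenerate second-order coefficient $A^{\mu\nu}$ of the heat operator (spatial block $g^{ij}$, vanishing time row/column, $B^{t}=1$, $f=q$), deduce $\xi^{t}=\xi^{t}(t)$ from the mixed block of (\ref{GPE.44}), get $\lambda=a-\xi^{t}_{,t}$ from the $k=t$ component of (\ref{GPE.43}) so that the conformal factor $\tfrac{1}{2}\xi^{t}_{,t}$ is spatially constant and the spatial part is homothetic, and then split into cases according to whether the $\xi^{k}_{,t}$ term in the spatial components of (\ref{GPE.43}) can be absorbed as $a_{,i}=-\tfrac{1}{2}T_{,t}S_{,i}$ (gradient case, giving (\ref{HEF.21})) or forces constant coefficients (nongradient case, giving (\ref{HEF.19})), with (\ref{GPE.42}) yielding the constraints (\ref{HEF.20}) and (\ref{HEF.22}). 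Your proposal is correct and is essentially this same argument, including the correct identification of where the real work lies.
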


In the special case in which the heat flux vanishes i.e. $q\left(
t,x,u\right) =0,~$i.e.%
\begin{equation}
g^{ij}u_{ij}-\Gamma ^{i}u_{i}-u_{t}=0  \label{WH.02}
\end{equation}
we have the following result.

\begin{theorem}
\label{The Lie of the heat equation}The Lie point symmetries of the
homogeneous heat equation in an $n-$dimensional Riemannian space 
\begin{equation}
g^{ij}u_{ij}-\Gamma ^{i}u_{i}-u_{t}=0  \label{LHEC.01}
\end{equation}%
are constructed from the homothetic algebra of the metric $g_{ij}$ as
follows:

a. If $Y^{i}$ is a HV/KV of the metric $g_{ij},$ the Lie point symmetry is 
\begin{equation}
\mathbf{X}=\left( 2\psi c_{1}t+c_{2}\right) \partial _{t}+c_{1}Y^{i}\partial
_{i}+\left( a_{0}u+b\left( t,x^{i}\right) \right) \partial _{u}
\label{LHEC.03}
\end{equation}%
where $c_{1},c_{2},,a_{0}$ are constants and $b\left( t,x^{i}\right) $ is a
solution of the homogeneous heat equation.

b. If $Y^{i}=S^{,i}$ is a gradient HV/KV of the metric $g_{ij}$ the Lie
point symmetry is%
\begin{equation}
\mathbf{X}=\psi t^{2}\partial _{t}+tS^{i}\partial _{i}-\left( \frac{1}{2}S+%
\frac{1}{2}n\psi t\right) u\partial _{u}+b\left( t,x^{i}\right) \partial _{u}
\label{LHEC.04+}
\end{equation}%
where $c_{3},c_{4},c_{5}$ are constants and $b\left( t,x^{i}\right) $ is a
solution of the homogeneous heat equation.
\end{theorem}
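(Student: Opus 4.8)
The plan is to derive this theorem as nothing more than the homogeneous specialization $q\equiv 0$ of Theorem \ref{The Lie of the heat equation with flux}, so that no independent symmetry computation is required. Concretely, I would substitute $q=0$ into the two constraint equations (\ref{HEF.20}) and (\ref{HEF.22}) and read off the restrictions that survive on the free functions entering the generators (\ref{HEF.19}) and (\ref{HEF.21}). Throughout, let $H(\cdot)=g^{ij}\partial_{ij}(\cdot)-\Gamma^{i}\partial_{i}(\cdot)-\partial_{t}(\cdot)$ denote the heat operator already appearing in those constraints, and let $\Delta_{g}=g^{ij}\partial_{ij}-\Gamma^{i}\partial_{i}$ be its spatial (Laplace--Beltrami) part.

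For part (a) the substitution $q=0$ collapses (\ref{HEF.20}) to the single identity $-a_{,t}\,u+H(b)=0$. Since $a=a(t)$ is free of $x$ and $u$, while $b=b(t,x)$ is free of $u$, I would separate this by powers of $u$: the coefficient of $u$ gives $a_{,t}=0$, so $a=a_{0}$ is constant, and the remaining part gives $H(b)=0$, so $b$ is a solution of the homogeneous heat equation. A trivial relabelling $c_{1}\leftrightarrow c_{2}$ of the two constants multiplying $\partial_{t}$ then turns (\ref{HEF.19}) into (\ref{LHEC.03}).

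For part (b) the same substitution reduces (\ref{HEF.22}) to an identity of the form $H(a)\,u+H(b)=0$, where now $a(t,x)=-\tfrac12 T_{,t}S+F(t)$ is the genuinely $x$-dependent $u$-linear coefficient of (\ref{HEF.21}). The $u$-independent part again forces $H(b)=0$. To evaluate the coefficient of $u$ I would use the defining property of the gradient HV/KV, $S_{;ij}=\psi g_{ij}$, whose trace gives $\Delta_{g}S=n\psi$; hence $H(a)=\Delta_{g}a-a_{,t}=-\tfrac12 n\psi\,T_{,t}+\tfrac12 T_{,tt}S-F_{,t}$. The crux of the argument is then a separation of variables: since $S$ is a non-constant function on the manifold (it is the potential of a nontrivial gradient field) whereas $T,F$ depend on $t$ alone, the vanishing of $H(a)$ splits into $T_{,tt}=0$ together with $-\tfrac12 n\psi\,T_{,t}-F_{,t}=0$. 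Thus $T$ is affine in $t$; its constant part merely regenerates the case-(a) generator built from $S^{,i}$, so the new content lies in the linear part, which I normalize to $T=t$. Then $F_{,t}=-\tfrac12 n\psi$ gives $F=-\tfrac12 n\psi t$, the $\partial_{t}$-coefficient becomes $2\psi\!\int T\,dt=\psi t^{2}$, the spatial coefficient becomes $t\,S^{,i}$, and the $u\partial_{u}$-coefficient becomes $-\big(\tfrac12 S+\tfrac12 n\psi t\big)u$, reproducing (\ref{LHEC.04+}).

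The step I expect to demand the most care is precisely this separation argument in part (b): one must be sure that $S$ and the constant function $1$ are linearly independent as functions on the space before equating the coefficients of $S$ and of $1$ to zero separately, and one must track the trace $\Delta_{g}S=n\psi$ correctly, since it is exactly this trace of the homothety condition that injects the dimension-dependent factor $n$ into the $u\partial_{u}$-coefficient. Everything else is bookkeeping that follows from reusing Theorem \ref{The Lie of the heat equation with flux} rather than redoing the prolongation calculation.
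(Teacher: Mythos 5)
Your proposal is correct and follows essentially the route the paper itself takes: Theorem \ref{The Lie of the heat equation} is not proved independently in this paper (it is quoted from \cite{PaliaMT JGP PDFs 2012}) but is presented exactly as the specialization $q=0$ of Theorem \ref{The Lie of the heat equation with flux}, and the Appendix carries out the analogous specialization (with $q=\frac{1}{2\tau}w$) to prove Proposition \ref{Cor1}. Part (a) of your argument is a literal substitution and is fine. In part (b), however, what you do is \emph{not} literal substitution, and that is precisely what saves the argument: the printed $u$-coefficient in (\ref{HEF.22}) is $-\frac{1}{2}T_{,t}\psi+\frac{1}{2}T_{,tt}S-F_{,t}$, whereas your $H(a)$, computed from the trace $\Delta_{g}S=n\psi$ of $S_{;ij}=\psi g_{ij}$, equals $-\frac{1}{2}n\psi T_{,t}+\frac{1}{2}T_{,tt}S-F_{,t}$. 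The two differ by the factor $n$, and yours is the correct coefficient: substituting $q=0$ into (\ref{HEF.22}) exactly as printed would force $F=-\frac{1}{2}\psi t$ and yield the generator $\psi t^{2}\partial_{t}+tS^{,i}\partial_{i}-\left(\frac{1}{2}S+\frac{1}{2}\psi t\right)u\partial_{u}$, which contradicts (\ref{LHEC.04+}) for every $n\geq 2$; the flat-space check, where the projective symmetry of the classical heat equation is $t^{2}\partial_{t}+tx^{i}\partial_{i}-\left(\frac{1}{4}r^{2}+\frac{n}{2}t\right)u\partial_{u}$, confirms the factor $n$ must be present, so (\ref{HEF.22}) evidently carries a typographical slip ($\psi$ where $n\psi$ is meant). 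Your trace computation is therefore not optional bookkeeping but the step that makes the derivation close, and your flagged caution about it was warranted. The remaining points --- separating the coefficients of $S$ and of $1$ (legitimate since the potential of a nontrivial gradient HV/KV is non-constant), folding the constant part of $T$ back into case (a), and absorbing integration constants into $c_{2}\partial_{t}$ and $a_{0}u\partial_{u}$ --- are all sound and agree with how the paper handles the same splitting in its Appendix.
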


In both cases $\psi =1$ for a HV and $\psi =0$ for a KV.

In the following we shall need the Lie point symmetries of Laplace equation $%
\Delta u=0$~or%
\begin{equation}
g^{ij}u_{ij}-\Gamma ^{i}u_{i}=0  \label{LE.01}
\end{equation}%
In this context we have the following result

\begin{theorem}[\protect\cite{Bozkov}]
\label{Bozhkov}The Lie point symmetries of Laplace equation (\ref{LE.01})
are generated from the CKVs of the metric $g_{ij}$ as follows\newline
\begin{equation}
X=\xi ^{i}\left( x^{k}\right) \partial _{i}+\left[ \left( \frac{2-n}{2}\psi
\left( x^{k}\right) +a_{0}\right) u+b\left( x^{k}\right) \right] \partial
_{u}
\end{equation}%
where $\xi ^{i}\left( x^{k}\right) $ is a CKV with conformal factor $\psi
\left( x^{k}\right) $ and the following conditions hold%
\begin{equation}
_{g}\Delta \psi =0~,~_{g}\Delta b=0
\end{equation}%
that is, both the conformal factors and the function $b$ are solutions of
Laplace equation.
\end{theorem}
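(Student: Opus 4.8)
The plan is to specialize the general Lie point symmetry conditions (\ref{GPE.42})--(\ref{GPE.46}) to the case of Laplace equation, where $A^{ij}=g^{ij}$, $B^{i}=\Gamma^{i}$ (independent of $u$), and $f=0$. Since $A^{ij}_{,u}=0$, the immediate conclusion noted after equation (\ref{GPE.46}) already tells us that the symmetry vector $\xi^{i}(x^{k})$ must be a CKV of the `metric' $A^{ij}=g^{ij}$. So the first step is simply to invoke (\ref{GPE.44}) with $f=0$ and $A^{ij}_{,u}=0$, obtaining $L_{\xi}g^{ij}=(\lambda-a)g^{ij}$. Comparing this with the definition $L_{\xi}g_{ij}=2\psi g_{ij}$ (and using $L_{\xi}g^{ij}=-2\psi g^{ij}$ for the inverse metric) pins down the relation $\lambda-a=-2\psi$, which I expect will be the source of the $\frac{2-n}{2}\psi$ coefficient once combined with the remaining equations.

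First I would impose $f=0$ in (\ref{GPE.42}), which collapses to $g^{ij}(a_{ij}u+b_{ij})-(a_{i}u+b_{i})\Gamma^{i}=0$. Separating the coefficients of $u$ and the $u$-independent part (valid because $a,b,\xi$ depend only on $x^{k}$) yields two Laplace-type equations: one reading $g^{ij}b_{ij}-\Gamma^{i}b_{i}={}_{g}\Delta b=0$, giving the stated condition that $b$ solves Laplace's equation, and the companion $g^{ij}a_{ij}-\Gamma^{i}a_{i}={}_{g}\Delta a=0$. Next I would turn to (\ref{GPE.43}) with $B^{i}=\Gamma^{i}$, $B^{k}_{,u}=0$, $\lambda$ constant: this becomes $g^{ij}\xi^{k}_{,ij}-2g^{ik}a_{,i}+a\Gamma^{k}-\xi^{k}_{,i}\Gamma^{i}+\xi^{i}\Gamma^{k}_{,i}-\lambda\Gamma^{k}=0$. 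The plan is to recognize the combination of derivative-of-$\xi$ terms as a geometric identity for a CKV: contracting the Killing-type equations and using that $\xi$ is conformal with factor $\psi$ should express $g^{ij}\xi^{k}_{,ij}$ in terms of $\psi^{,k}$ and curvature/Christoffel contractions, forcing $a_{,i}$ to be proportional to $\psi_{,i}$.

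The main obstacle I expect is precisely this third step: extracting $a$ from (\ref{GPE.43}). The computation requires rewriting the coordinate-derivative expression $g^{ij}\xi^{k}_{,ij}$ in covariant form and invoking the standard differential consequence of the conformal Killing equation (for a CKV, $\nabla^{2}\xi^{k}$ equals a curvature term plus $(2-n)\psi^{,k}$ up to conventions). Matching this against the term $-2g^{ik}a_{,i}$ should yield $a_{,i}=\tfrac{2-n}{2}\psi_{,i}$, hence $a=\frac{2-n}{2}\psi+a_{0}$ with $a_{0}$ a constant. Once $a$ has this form, I would verify consistency: the earlier relation $\lambda-a=-2\psi$ fixes $\lambda$, the equation ${}_{g}\Delta a=0$ forces ${}_{g}\Delta\psi=0$ (the remaining stated condition), and substituting $a=\frac{2-n}{2}\psi+a_{0}$, $b$ back into the symmetry generator $\mathbf{X}=\xi^{i}\partial_{i}+(au+b)\partial_{u}$ reproduces exactly the claimed form. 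I would then close by noting the converse is immediate: any CKV with ${}_{g}\Delta\psi=0$ together with a harmonic $b$ satisfies all of (\ref{GPE.42})--(\ref{GPE.46}), so these vectors generate the full Lie point symmetry algebra of (\ref{LE.01}).
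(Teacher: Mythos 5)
You should first be aware that the paper contains no proof of this statement: Theorem \ref{Bozhkov} is imported verbatim from the cited work of Bozhkov and Freire \cite{Bozkov}, and the only derivation of this kind the paper actually carries out is the Appendix proof of Proposition \ref{Cor1}. So there is no internal argument to compare against; what your proposal does is reconstruct, from the paper's own symmetry conditions (\ref{GPE.42})--(\ref{GPE.46}), the standard proof, and that reconstruction is essentially correct. Your three steps are sound: (\ref{GPE.44}) with $A^{ij}_{,u}=0$ gives $L_{\xi }g^{ij}=(\lambda -a)g^{ij}$, so $\xi ^{i}$ is a CKV and $\lambda =a-2\psi $; setting $f=0$ in (\ref{GPE.42}) and separating the coefficient of $u$ from the $u$-free part gives $_{g}\Delta a=0$ and $_{g}\Delta b=0$; and the heavy step you only sketch does go through, namely in (\ref{GPE.43}) the terms $a\Gamma ^{k}-\lambda \Gamma ^{k}=2\psi \Gamma ^{k}$ combine with the identity $g^{ij}\xi ^{k}_{,ij}-\Gamma ^{i}\xi ^{k}_{,i}+\xi ^{i}\Gamma ^{k}_{,i}=g^{ij}L_{\xi }\Gamma ^{k}_{ij}-2\psi \Gamma ^{k}$ (which follows from $\Gamma ^{k}_{mj}L_{\xi }g^{mj}=-2\psi \Gamma ^{k}$) to reduce that condition to $g^{ij}L_{\xi }\Gamma ^{k}_{ij}=2a^{,k}$; the CKV identity $g^{ij}L_{\xi }\Gamma ^{k}_{ij}=(2-n)\psi ^{,k}$ then yields $a=\frac{2-n}{2}\psi +a_{0}$ exactly as you anticipate. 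What your route buys, relative to the paper, is self-containedness: the theorem becomes a corollary of the machinery of section \ref{Lie point symmetry conditions} rather than an external citation.

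Two corrections of detail. First, do not assume $\lambda $ is constant: it is the function $a-2\psi $ determined by (\ref{GPE.44}), and it is generally nonconstant; the assumption happens to be harmless only because $\lambda $ enters (\ref{GPE.43}) purely algebraically, but as stated it is unjustified. Second, your final inference, that $_{g}\Delta a=0$ forces $_{g}\Delta \psi =0$, divides by the factor $\frac{2-n}{2}$ and therefore fails when $n=2$: in two dimensions $a=a_{0}$, no constraint on $\psi $ arises from this equation, and the theorem's condition $_{g}\Delta \psi =0$ is not recovered by your argument. The conclusion as you derive it holds for $n>2$, which is consistent with the dimensional restrictions under which the paper applies the theorem (e.g.\ Proposition \ref{Prop} assumes $n>2$), but this restriction should be stated explicitly in your proof.
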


In the following sections we apply these theorems in order to reduced the
heat equation by the extra Lie point symmetries admitted in the special
spaces we considered in section \ref{Riemannian spaces and collineations}.

\section{The heat equation in a $1+n$ decomposable space}

\label{The heat equation in an 1+n decomposable space}

Without loss of generality we assume the gradient KV to be the $\partial
_{x} $ so that the metric has the generic form%
\begin{equation}
ds^{2}=dx^{2}+h_{AB}dy^{A}dy^{B}~,~h_{AB}=h_{AB}\left( y^{C}\right)
\label{WH.03}
\end{equation}%
where $h_{AB}$ $A,B,C=1,...,n$ is the metric of the $n-$ dimensional space.
For the metric (\ref{WH.03}) the heat equation (\ref{WH.02}) takes the form%
\begin{equation}
u_{xx}+h^{AB}u_{AB}-\Gamma ^{A}u_{B}-u_{t}=0.  \label{WH.04}
\end{equation}%
Application of Theorem \ref{The Lie of the heat equation} gives that (\ref%
{WH.04}) admits the following \emph{extra }Lie point symmetries generated by
the gradient KV $\partial _{x}:$ 
\begin{equation*}
X_{1}=\partial _{x}~,~X_{2}=t\partial _{x}-\frac{1}{2}xu\partial _{u}
\end{equation*}%
with nonvanishing commutators 
\begin{equation}
\left[ X_{t},X_{2}\right] =X_{1}~,~\left[ X_{2},X_{1}\right] =\frac{1}{2}%
X_{u}.  \label{WH.04.a}
\end{equation}

We reduce (\ref{WH.04}) using the zeroth order invariants of the extra Lie
point symmetries $X_{1},X_{2}$.

\subsection{Reduction by $X_{1}$}

\label{gradientKV1}

The zeroth order invariants of $X_{1}$ are%
\begin{equation*}
\tau =t~,~y^{A}~,~w=u.
\end{equation*}%
Taking these invariants as new coordinates eqn (\ref{WH.04}) reduces to 
\begin{equation}
_{h}\Delta w-w_{t}=0  \label{WH.05}
\end{equation}%
where $_{h}\Delta $ is the Laplace operator in the~$n-$dimensional space
with metric $h_{AB}:$ 
\begin{equation}
_{h}\Delta w=h^{AB}w_{AB}-\Gamma ^{A}w_{B}.  \label{WH.05.a}
\end{equation}%
Equation (\ref{WH.05}) is the homogeneous heat eqn (\ref{WH.02}) in the $n-$%
\ dimensional space with metric $h_{AB}$. \ According to the Theorem \ref%
{The Lie of the heat equation} the Lie point symmetries of this equation are
the homothetic algebra of $h_{AB}$.{\LARGE \ }It is easy to show that the
homothetic algebra of the $n$ and the $1+n$ metrics are related as follows 
\cite{TNA}:

a. The KVs of the $n-$ metric are identical with those of the $1+n$ metric.

b. The $1+n$ metric admits a HV\ if the $n$ metric admits one and if $%
_{n}H^{A}$ is the HV of the $n$ - metric then the HV\ of the $1+n$ metric is
given by the expression 
\begin{equation}
_{1+n}H^{\mu }=x\delta _{x}^{\mu }+_{n}H^{A}\delta _{A}^{\mu }\qquad \quad
\mu =x,1,...,n.  \label{WH.05.b}
\end{equation}

The above imply that equation (\ref{WH.05}) inherits all symmetries which
are generated from the KVs/HV of the $n-$metric $h_{AB}.$ Hence we do not
have Type II symmetries in this reduction.

\subsection{Reduction by $X_{2}$}

\label{gradientKV2}

The zeroth order invariants of $X_{2}$ are%
\begin{equation*}
\tau =t~,~y^{A}~,~w=ue^{\frac{x^{2}}{4t}}.
\end{equation*}%
Taking these invariants as new coordinates eqn (\ref{WH.04}) reduces to%
\begin{equation}
h^{AB}w_{AB}-\Gamma ^{A}w_{B}-w_{\tau }-\frac{1}{2\tau }w=0  \label{WH.06}
\end{equation}%
or%
\begin{equation*}
_{h}\Delta w-w_{\tau }=\frac{1}{2\tau }w.
\end{equation*}%
This is the nonhomogeneous heat equation with flux $q\left( \tau
,y^{A},w\right) =\frac{1}{2\tau }w$. Application of Theorem \ref{The Lie of
the heat equation with flux} gives the following result.

\begin{proposition}
\label{Cor1}\footnote{%
The proof is given in Appendix \ref{Appendix}.} The Lie point symmetries of
the heat equation (\ref{WH.06}) in an $n-$dimensional Riemannian space with
metric $h_{AB}$ are constructed form the homothetic algebra of the metric as
follows:\newline

a. $Y^{i}$ is a HV/KV.\newline
The Lie point symmetry is 
\begin{equation}
X=\left( 2c_{2}\psi \tau +c_{1}\right) \partial _{\tau }+c_{2}Y^{i}\partial
_{i}+\left[ \left( -\frac{c_{1}}{2\tau }+a_{0}\right) w+b\left( \tau
,x\right) \right] \partial _{w}
\end{equation}

b. $Y^{i}=S_{J}^{,i}$ is a gradient HV/KV (the index $J$ counts gradient
KVs).\newline
The Lie point symmetry is 
\begin{equation}
X=\left( \psi T_{0}\tau ^{2}\right) \partial _{\tau }+T_{0}\tau
S_{J}^{,i}\partial _{i}-\left( \frac{1}{2}T_{0}S_{J}+T_{0}\psi \tau \right)
w\partial _{w}
\end{equation}%
where $b\left( \tau ,x\right) $ is a solution of the heat equation (\ref%
{WH.06}).
\end{proposition}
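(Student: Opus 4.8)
The plan is to obtain Proposition \ref{Cor1} as a direct specialization of Theorem \ref{The Lie of the heat equation with flux} to the particular flux $q(\tau,y^{A},w)=\frac{1}{2\tau}w$ carried by the reduced equation (\ref{WH.06}). First I would note that (\ref{WH.06}) is exactly a heat equation with flux on the $n$-dimensional space with metric $h_{AB}$, under the identifications $t\leftrightarrow\tau$, $u\leftrightarrow w$, $g_{ij}\leftrightarrow h_{AB}$. Hence both branches of Theorem \ref{The Lie of the heat equation with flux} apply verbatim: the generators have the form (\ref{HEF.19}) for a generic HV/KV and the form (\ref{HEF.21}) for a gradient HV/KV, and the only remaining work is to solve the accompanying constraints (\ref{HEF.20}) and (\ref{HEF.22}) for this specific $q$.

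The simplification that makes the argument short is that the flux is \emph{linear in $w$ and independent of the spatial coordinates}, so $q_{,i}=0$ and $q_{,w}=\frac{1}{2\tau}$. For part (a) I would substitute these into (\ref{HEF.20}); the combination $-(aw+b)q_{,w}+aq$ then collapses, because the two contributions proportional to $aw$ cancel and only $-\frac{b}{2\tau}$ survives, while the flux-derivative term $-(2\psi c_{2}q\tau+c_{1}q)_{\tau}$ contributes $+\frac{c_{1}}{2\tau^{2}}w$. Separating the coefficient of $w$ from the remainder gives the ODE $a_{\tau}=\frac{c_{1}}{2\tau^{2}}$, which integrates to $a=-\frac{c_{1}}{2\tau}+a_{0}$, while the $w$-free remainder is precisely $_{h}\Delta b-b_{\tau}-\frac{1}{2\tau}b=0$, i.e. $b$ is again a solution of (\ref{WH.06}). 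This reproduces the stated generator.

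For part (b) I would run the same substitution in (\ref{HEF.22}). After the analogous cancellation of the $q_{,w}$ terms, the condition splits into a part carrying $b$ (again forcing $b$ to solve (\ref{WH.06})) and a part linear in $w$. The latter is the only delicate point: it contains a term $\frac{1}{2}T_{,\tau\tau}S$ in which $S$ is a genuinely non-constant function of the spatial coordinates, whereas every other term depends on $\tau$ alone. Isolating the $S$-dependent piece forces $T_{,\tau\tau}=0$; discarding the constant solution (which merely reproduces a generic HV/KV already accounted for in part (a)) leaves $T=T_{0}\tau$. Feeding this back, the remaining $\tau$-only equation reduces to $F_{,\tau}=-\psi T_{0}$, hence $F=-\psi T_{0}\tau$, and inserting $T$ and $F$ into (\ref{HEF.21}) yields the generator displayed in the proposition.

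The main obstacle is therefore not conceptual but one of bookkeeping: one must keep track of the explicit $\tau$-derivatives in the terms $(2\psi c_{2}q\tau+c_{1}q)_{\tau}$ and $(2\psi q\int T\,d\tau)_{\tau}$, taken with $w$ held fixed, and correctly separate the $S$-dependent contribution in part (b) to justify $T_{,\tau\tau}=0$. Once these separations are carried out the ODEs for $a$, $T$ and $F$ are elementary, and the statement that $b$ solves the reduced heat equation follows immediately from the $w$-free remainder of each constraint.
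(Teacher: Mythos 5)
Your proposal is correct and follows essentially the same route as the paper's own proof in the Appendix: specialize Theorem \ref{The Lie of the heat equation with flux} to $q=\frac{1}{2\tau}w$, cancel the $q_{,w}$ contributions, separate the coefficient of $w$ from the $w$-free part to get $a=-\frac{c_{1}}{2\tau}+a_{0}$ and the condition that $b$ solves (\ref{WH.06}), and in case (b) use the spatial dependence of $S$ to force $T_{,\tau\tau}=0$ and then $F=-\psi T_{0}\tau$. Your explicit justification of $T_{,\tau\tau}=0$ and of discarding the constant part $T_{1}$ (as reproducing case (a)) only makes explicit steps the paper leaves implicit.
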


We infer that for this reduction we have the Type II hidden symmetry ~$%
\partial _{\tau }-\frac{1}{2t}w\partial _{w}.$ The rest of the point
symmetries are inherited.

\section{The heat equation in a space which admits a gradient HV}

\label{The heat equation in a (n+1) space}The metric of an $n$ dimensional
Riemannian space which admits the gradient HV $r\partial _{r}$ has the
generic form \cite{Tupper}%
\begin{equation}
ds^{2}=dr^{2}+r^{2}h_{AB}dy^{A}dy^{B}.  \label{WH.11a}
\end{equation}%
In this space the heat equation (\ref{WH.001}) is 
\begin{equation}
u_{rr}+\frac{1}{r^{2}}h^{AB}u_{AB}+\frac{\left( n-1\right) }{r}u_{r}-\frac{1%
}{r^{2}}\Gamma ^{A}u_{A}-u_{t}=0  \label{WH.11}
\end{equation}%
where $\Gamma ^{A}=\Gamma _{BC}^{A}h^{BC}$ and $\Gamma _{BC}^{A}$ are the
connection coefficients of the Riemannian metric $h_{AB}$ ($%
A,B,C=1,2,...,n). $ Application of Theorem \ref{The Lie of the heat equation}
gives that (\ref{WH.11}) admits the following \emph{extra }Lie point
symmetries generated by the gradient homothetic vector{\LARGE \ }%
\begin{equation}
~\bar{X}_{1}=2t\partial _{t}+r\partial _{r}~\ ,~~\bar{X}_{2}=t^{2}\partial
_{t}+tr\partial _{r}-\left( \frac{1}{4}r^{2}+\frac{n}{2}t\right) u\partial
_{u}~\,  \label{WH.11b}
\end{equation}%
with nonzero commutators 
\begin{equation*}
\left[ X_{t},\bar{X}_{1}\right] =2X_{t}~~,~~\left[ \bar{X}_{1},\bar{X}_{2}%
\right] =2X_{t}
\end{equation*}%
\begin{equation*}
\left[ X_{t},\bar{X}_{2}\right] =\bar{X}_{1}-\frac{n}{2}X_{u}.
\end{equation*}%
We consider again the reduction of (\ref{WH.11}) using the zeroth order
invariants of these extra Lie point symmetries.

\subsection{Reduction by $\bar{X}_{1}$}

\label{HV1}

The zeroth order invariants of $\bar{X}_{1}$ are $\phi =\frac{r}{\sqrt{t}}%
~,~w=u$,~$y^{A}.$ We choose $w=w\left( \phi ,y^{A}\right) $ as the dependent
variable.

Replacing in (\ref{WH.11}) we find the reduced PDE 
\begin{equation}
w_{\phi \phi }+\frac{1}{\phi ^{2}}h^{AB}w_{AB}+\frac{\left( n-1\right) }{%
\phi }w_{\phi }+\frac{\phi }{2}w_{\phi }-\frac{1}{\phi ^{2}}\Gamma
^{A}w_{A}=0.  \label{WH.12}
\end{equation}
Consider a nonvanishing function $N^{2}\left( \phi \right) $ and divide (\ref%
{WH.12}) with $N^{2}\left( \phi \right) $ \ to get:%
\begin{equation}
\frac{1}{N^{2}}w_{\phi \phi }+\frac{1}{\phi^{2} N^{2}}h^{AB}w_{AB}+\frac{%
\left( n-1\right) }{{\phi}N^{2}}w_{\phi }+\frac{\phi }{2N^{2}}w_{\phi }-%
\frac{1}{\phi ^{2}N^{2}}\Gamma ^{A}w_{A}=0  \label{WH.12C}
\end{equation}
It follows that \ (for $n>2$) equation (\ref{WH.12C})\ can be written as%
\begin{equation}
_{\bar{g}}\Delta w=0  \label{WH.12d}
\end{equation}%
where $_{\bar{g}}\Delta $ $\ $\ is the Laplace operator if \ $N^{2}\left(
\phi \right) =\exp \left( \frac{\phi ^{2}}{2\left( n-2\right) }\right) $ and 
$\bar{g}_{ij}$ is the conformally related metric 
\begin{equation}
d\bar{s}^{2}=\exp \left( \frac{\phi ^{2}}{2\left( n-2\right) }\right) \left(
d\phi ^{2}+\phi ^{2}h_{AB}dy^{A}dy^{B}\right).  \label{WH.12e}
\end{equation}

According to \cite{Bozkov} \ the Lie point symmetries of (\ref{WH.12d}) are
the CKVs of the metric (\ref{WH.12e}) \ whose conformal factor satisfies the
condition $_{\bar{g}}\Delta \psi =0.$ Therefore Type II hidden symmetries
will be generated from the proper CKVs. The existence and the number of
these vectors depends mainly on the $n-$ metric $h_{AB}.$

\subsection{Reduction by $\bar{X}_{2}$}

\label{HV2}

For $\bar{X}_{2}$ the zeroth order invariants invariants are $\phi =\frac{r}{%
t}~,~w=ut^{\frac{n}{2}}e^{\frac{r^{2}}{4t}}$,$~y^{A}$. We choose $w=w\left(
\phi ,y^{A}\right) $ as the dependent variable and we have the reduced
equation%
\begin{equation}
_{g}\Delta w=0  \label{WH.13}
\end{equation}%
where%
\begin{equation}
_{g}\Delta w=w_{\phi \phi }+\frac{\left( n-1\right) }{\phi }w_{\phi }+\frac{1%
}{\phi ^{2}}h^{AB}w_{AB}-\frac{1}{\phi ^{2}}\Gamma ^{A}w_{A}.
\end{equation}%
Equation (\ref{WH.13}) is the Laplace equation in the space $\left( \phi
,y^{A}\right) $ with metric%
\begin{equation}
ds^{2}=d\phi ^{2}+\phi ^{2}h_{AB}dy^{A}dy^{B}.  \label{WH.14}
\end{equation}

The Lie symmetries of Laplace equation (\ref{WH.13}) are given in Theorem %
\ref{Bozhkov}. As in the last case the existence and the number of these
vectors depends mainly on the $n$ metric $h_{AB}.$

We note that both vectors $\bar{X}_{1},\bar{X}_{2}$ \ are generated form the
gradient HV and in both cases the heat equation has been reduced to Laplace
equation. This gives the following

\begin{proposition}
\label{Prop}The reduction of the heat equation (\ref{LHEC.01}) in a space
with metric (\ref{WH.11a}) $\left( n>2\right) ~$by means of the Lie
symmetries generated by the gradient HV leads to Laplace equation $\Delta
u=0 $, where \thinspace $\Delta $ is the Laplace operator for the metric (%
\ref{WH.12e}) if the reduction is done by $\bar{X}_{1}$ and for the metric (%
\ref{WH.14}) if the reduction is done by $\bar{X}_{2}$.
\end{proposition}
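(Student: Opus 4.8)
The plan is to prove the statement by treating the two generators $\bar{X}_{1}$ and $\bar{X}_{2}$ separately, since the proposition simply consolidates the outcomes of the two reductions already carried out in Subsections \ref{HV1} and \ref{HV2}. In each case I would start from the heat equation in the form (\ref{WH.11}), pass to the zeroth order invariants of the relevant symmetry as new coordinates, and verify by direct substitution that the reduced equation is a Laplace equation ${}_{\bar{g}}\Delta w=0$ for the claimed metric. The only conceptual ingredient beyond the chain rule is the behaviour of the Laplace--Beltrami operator under a conformal rescaling, which is what forces the particular conformal factor in (\ref{WH.12e}) and singles out the hypothesis $n>2$.

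For the reduction by $\bar{X}_{1}$ I would introduce the similarity variable $\phi=r/\sqrt{t}$ together with $w=u$ and the unchanged $y^{A}$, compute $u_{t},u_{r},u_{rr}$ by the chain rule, and insert them into (\ref{WH.11}); the explicit $t$-dependence cancels and one is left with (\ref{WH.12}). The crucial observation is that (\ref{WH.12}) is exactly ${}_{g}\Delta w+\tfrac{\phi}{2}w_{\phi}=0$, where ${}_{g}\Delta$ is the Laplace operator of the metric (\ref{WH.14}); so the whole task reduces to absorbing the residual first order drift term $\tfrac{\phi}{2}w_{\phi}$.

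To absorb it I would use the conformal law of the Laplacian. If $\bar{g}_{ij}=N^{2}g_{ij}$ in dimension $n$, then a short computation using $\sqrt{\bar{g}}=N^{n}\sqrt{g}$ and $\bar{g}^{ij}=N^{-2}g^{ij}$ gives
\[
{}_{\bar{g}}\Delta w=\frac{1}{N^{2}}\Big[\,{}_{g}\Delta w+(n-2)\frac{N_{,i}}{N}g^{ij}w_{,j}\Big].
\]
Taking $N=N(\phi)$ and $g$ as in (\ref{WH.14}), the bracketed correction collapses to $(n-2)(N_{,\phi}/N)w_{\phi}$, so matching $(n-2)N_{,\phi}/N=\phi/2$ yields $N^{2}=\exp\!\big(\phi^{2}/(2(n-2))\big)$, which is precisely (\ref{WH.12e}). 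Dividing (\ref{WH.12}) by this $N^{2}$ then recasts it as ${}_{\bar{g}}\Delta w=0$. This is the step where the restriction $n>2$ is essential, since $n=2$ makes the correction term vanish and the drift cannot be removed by a conformal change of metric.

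For the reduction by $\bar{X}_{2}$ I would take $\phi=r/t$ and the weighted invariant $w=u\,t^{n/2}e^{r^{2}/(4t)}$, i.e. substitute $u=t^{-n/2}e^{-r^{2}/(4t)}w(\phi,y^{A})$. Here the exponential and power prefactors are precisely the integrating factors that cancel the time derivative together with the drift, so after substitution (\ref{WH.11}) collapses directly to (\ref{WH.13}), the Laplace equation for the metric (\ref{WH.14}), with no conformal rescaling required. The main obstacle in both parts is purely the bookkeeping of the chain-rule terms; once the reduced operators are written out in full, identifying them as (conformally rescaled) Laplace operators is immediate.
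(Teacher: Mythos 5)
Your proposal is correct and follows essentially the same route as the paper: reduction by the zeroth-order invariants of $\bar{X}_{1}$ and $\bar{X}_{2}$, recovering (\ref{WH.12}) and (\ref{WH.13}) respectively, followed in the $\bar{X}_{1}$ case by division by the conformal factor $N^{2}\left( \phi \right) =\exp \left( \frac{\phi ^{2}}{2\left( n-2\right) }\right) $ so that the result is ${}_{\bar{g}}\Delta w=0$ for the metric (\ref{WH.12e}). The only difference is presentational: you derive this conformal factor systematically from the transformation law of the Laplace--Beltrami operator under $\bar{g}_{ij}=N^{2}g_{ij}$, whereas the paper simply asserts the choice of $N^{2}$; your derivation also makes explicit why the hypothesis $n>2$ is required.
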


\section{Applications}

\label{Applications}

In this section we consider applications of the general results of sections %
\ref{The heat equation in an 1+n decomposable space} and \ref{The heat
equation in a (n+1) space}.

\subsection{The heat equation in a $1+n$ decomposable space where $n$ is a
space of constant curvature}

Consider the $1+n$ decomposable space%
\begin{equation}
ds^{2}=dx^{2}+N^{-2}\left( y^{C}\right) \delta _{AB}y^{A}y^{B}  \label{WH.17}
\end{equation}%
where $N\left( y^{C}\right) =\left( 1+\frac{K}{4}y^{C}y_{C}\right) ,$ that
is, the $n$ space is a space of constant non vanishing ($K\neq 0)$
curvature. The metric (\ref{WH.17}) \ does not admit proper HV. However
admits $\frac{n\left( n-1\right) }{2}+n~$nongradient KVs and $1$ gradient KV
as follows \cite{TNA}%
\begin{eqnarray*}
1~\text{gradient KV}\text{:~} &&\partial _{x} \\
n~\text{nongradient KVs} &\text{:}&\text{\ }K_{V}=\frac{1}{N}\left[ \left(
2N-1\right) \delta _{I}^{i}+\frac{K}{2}Nx_{I}x^{i}\right] \partial _{i} \\
\frac{n\left( n-1\right) }{2}\text{nongradient KVs} &\text{:}&\text{\ }%
X_{IJ}=\delta _{\lbrack I}^{j}\delta _{J]}^{i}\partial _{i}
\end{eqnarray*}%
In a space with metric (\ref{WH.17}) the heat equation takes the form%
\begin{equation}
u_{xx}+N^{2}\left( y^{C}\right) \delta ^{AB}u_{AB}-\frac{N}{2}%
Ky^{A}u_{A}-u_{t}=0  \label{WH.18}
\end{equation}%
which is the homogeneous heat equation. Applying Theorem \ref{The Lie of the
heat equation} we find that equation (\ref{WH.18}) admits the extra Lie
point symmetries 
\begin{equation}
\partial _{x}~,~t\partial _{x}-\frac{1}{2}xu\partial _{x}~,K_{V}~,X_{IJ}.
\end{equation}%
The Lie point symmetries which are generated by the gradient KV are\footnote{%
Here the algebra is the one given in section \ref{The heat equation in an
1+n decomposable space} and a separate algebra is the algebra of the KVs of
the space of constant curvature. More specifically the KVs $K_{V}~,X_{IJ}$
commute with all other symmetries but not between themselves} $\partial
_{x}~,~t\partial _{x}-\frac{1}{2}xu\partial _{x}$.\ 

Reduction of (\ref{WH.18}) by means of the gradient KV $\partial _{x}$
results in the special form of equation (\ref{WH.05}) 
\begin{equation}
\frac{1}{N^{2}\left( y^{C}\right) }\delta ^{AB}u_{AB}-\frac{N}{2}%
Ky^{A}u_{A}-u_{t}=0.  \label{WH.19}
\end{equation}%
This is the homogeneous heat equation in an $n$- dimensional space of
constant curvature. The Lie symmetries of this equation have been determined
in \cite{PaliaMT JGP PDFs 2012} and are inherited symmetries. Hence in this
case we do not have Type II\ hidden symmetries.

Reduction of (\ref{WH.18}) with the Lie point symmetry $~t\partial _{x}-%
\frac{1}{2}xu\partial _{x}$ \ gives that the reduced equation (\ref{WH.06})
is 
\begin{equation}
N^{2}\left( y^{C}\right) \delta ^{AB}w_{AB}-\frac{N}{2}Ky^{A}w_{A}-w_{\tau }=%
\frac{1}{2\tau }w~,~w=ue^{\frac{x^{2}}{4t}}.  \label{WH.20}
\end{equation}%
which is the heat equation with flux. By Proposition \ref{Cor1} the Lie
point symmetries of (\ref{WH.20}) are:%
\begin{equation}
X=c_{1}\partial _{\tau }+\left( K_{V}~+X_{IJ}\right) +\left[ \left( -\frac{%
c_{1}}{2\tau }+a_{0}\right) w+b\left( \tau ,y^{C}\right) \right] \partial
_{w}.  \label{WH.21}
\end{equation}%
where $c_{1},a_{0}$ are constants. From section \ref{gradientKV2} we have
that Type II hidden symmetry is the one defined by the constant $c_{1}$.

\subsection{FRW space-time with a gradient HV}

Consider the spatially flat FRW\ metric widely used in Cosmology%
\begin{equation}
ds^{2}=d\sigma ^{2}-\sigma ^{2}\left( dx^{2}+dy^{2}+dz^{2}\right)
\label{WH.22a}
\end{equation}%
where $\tau $ is the conformal time. This metric admits the gradient HV%
\begin{equation*}
H=\sigma \partial _{\sigma }~~\left( \psi _{H}=1\right)
\end{equation*}%
and six nongradient KVs%
\begin{equation*}
X_{1-3}=\partial _{y^{A}}~~,~~X_{4-6}=y^{B}\partial _{A}-y^{A}\partial _{B}.
\end{equation*}%
where $y^{A}=\left( x,y,z\right) .~$

In this space the heat equation takes the form%
\begin{equation}
u_{\sigma \sigma }-\frac{1}{\sigma ^{2}}\left( u_{xx}+u_{yy}+u_{zz}\right) +%
\frac{3}{\sigma }u_{\sigma }-u_{t}=0.  \label{WH.22}
\end{equation}%
Its Lie point symmetries (\ref{WH.22}) have been determined in \cite{PaliaMT
JGP PDFs 2012} and have as follows:%
\begin{eqnarray*}
&&\partial _{t}~,~u\partial _{u}~,~b\left( \tau ,y^{A}\right) \partial
_{u}~,~X_{1-3}~,~X_{4-6}~, \\
H_{1} &=&2t\partial _{t}+\sigma \partial _{\sigma }~,~H_{2}=t^{2}\partial
_{t}+t\sigma \partial _{\sigma }-\left( \frac{1}{4}\sigma ^{2}+2t\right)
u\partial _{u}~.
\end{eqnarray*}%
The Lie symmetries $H_{1},H_{2}$ are produced by the gradient HV therefore
we use them to reduce (\ref{WH.22}). We note that this case is a special
case of the one we considered in section \ref{The heat equation in a (n+1)
space}\ for $h_{AB}=\delta _{AB}$.

Reduction by $H_{1}$ gives that (\ref{WH.22})~becomes: 
\begin{equation}
w_{\phi \phi }-\frac{1}{\phi ^{2}}\left( w_{xx}+w_{yy}+w_{zz}\right) +\left( 
\frac{3}{\phi }+\frac{\phi }{2}\right) w_{\phi }=0  \label{WH.23}
\end{equation}%
where $\phi =\frac{r}{\sqrt{\sigma }}~,~w=u.$ This is a special form of (\ref%
{WH.12}).

Dividing with $N^{2}\left( \phi \right) =\exp \left( \frac{\phi ^{2}}{4}%
\right) $ we find that (\ref{WH.23}) is written as 
\begin{equation}
_{\bar{g}}\Delta w=0  \label{WH.23a}
\end{equation}%
where the metric $\bar{g}_{ij}$ is the conformally related metric of \ (\ref%
{WH.22a}):%
\begin{equation}
d\bar{s}^{2}=e^{\frac{\phi ^{2}}{4}}\left( d\phi ^{2}-\phi ^{2}\left(
dx^{2}+dy^{2}+dz^{2}\right) \right)  \label{WH.23aa}
\end{equation}

From proposition \ref{Bozhkov} we have that the Lie point symmetries of (\ref%
{WH.23a}) are generated from elements of the conformal algebra of the space
whose conformal factors satisfy the condition $_{\bar{g}}\Delta \psi =0.$
The metric (\ref{WH.23a}) is conformally flat therefore its conformal group
is the same with that of the flat space \cite{TNA,Maartens}, however with
different subgroups. We find that these vectors (i.e. the Lie point
symmetries)\ are the vectors%
\begin{equation}
~X_{1-3}~,~X_{4-6}~,\partial _{t},\text{ }w\partial _{w}~,~b_{0}\left( \phi
,y^{A}\right) \partial _{w}.~  \label{WH.24}
\end{equation}%
We conclude that there are no Type II symmetries for this reduction.

Using reduction by $H_{2}$ we find that (\ref{WH.22})~reduces to :%
\begin{equation}
w_{\phi \phi }-\frac{1}{\phi ^{2}}\left( w_{xx}+w_{yy}+w_{zz}\right) +\frac{3%
}{\phi }w_{\phi }=0  \label{WH.25}
\end{equation}%
where $\phi =\frac{\tau }{\tau }~,~w=ut^{2}e^{\frac{\tau ^{2}}{4t}}.~$\ This
is a special form of (\ref{WH.13}) which is the Laplace equation. In this
case the results of \cite{Bozkov} apply and we infer that the Lie point
symmetries of (\ref{WH.25}) are: \ 
\begin{eqnarray}
&&X_{1-3}~,~X_{4-6}~,~~w\partial _{w}~,~b_{1}\left( \phi ,y^{A}\right)
\partial _{w}  \notag \\
X_{7} &=&\phi \partial _{\phi }~,~X_{8-10}=\phi y^{A}\partial _{\phi }+\ln
\phi \partial _{A}-y^{A}w\partial _{w}.
\end{eqnarray}%
The vector $X_{7}$ is the proper HV of the metric and the vectors $X_{8-10}$
the proper CKVs which are not special CKVs, therefore these vectors are Type
II\ hidden symmetries. A further analysis of (\ref{WH.25}) can be found in 
\cite{Kara}.

\section{The Heat equation in spaces which admit a nongradient HV}

\label{The Heat equation in spaces which admit a nongradient HV}In the
previous sections we considered the reduction of the homogeneous heat
equation in Riemannian spaces which admit a gradient KV or a gradient HV. In
the present section we consider the special class of Petrov type space-times
which admit a nongradient HV which acts simply transitively.

\subsection{Petrov type N space-time}

The metric of the Petrov type N space-time is 
\begin{equation}
ds^{2}=dx^{2}+x^{2}dy^{2}+2d\rho dv+\ln x^{2}d\rho ^{2}  \label{WH.07b}
\end{equation}%
and has the homothetic algebra \cite{Steele 1991 (b)}%
\begin{eqnarray*}
K^{1} &=&\partial _{\rho }~,~K^{2}=\partial _{v}~,~K^{3}=\partial _{y} \\
H &=&x\partial _{x}+\rho \partial _{\rho }+\left( v-2\rho \right) \partial
_{v}~~\left( \psi _{H}=1\right)
\end{eqnarray*}%
where $K^{1-3}$ are KVs and $H$ is a nongradient HV.

The heat equation (\ref{WH.02}) in this space-time is 
\begin{equation}
u_{xx}+\frac{1}{x^{2}}u_{yy}+2u_{\rho v}-2\ln x^{2}~u_{vv}+\frac{1}{x}%
u_{x}-u_{t}=0.  \label{WH.07}
\end{equation}%
Application of Theorem \ref{The Lie of the heat equation} gives that the
extra Lie point symmetries of (\ref{WH.07}) are 
\begin{equation*}
X_{1-3}=K_{1-3}~\ ,~X_{4}=2t\partial _{t}+H
\end{equation*}%
with nonzero commutators 
\begin{equation*}
\left[ X_{t},X_{4}\right] =2X_{t}
\end{equation*}%
\begin{equation*}
\left[ X_{1},X_{4}\right] =X_{1}-2X_{2}~,~\left[ X_{2},X_{4}\right] =X_{2}.~
\end{equation*}

We use $X_{4}$ to reduce the PDE because this is the Lie symmetry generated
by the HV. The zeroth order invariants of $X_{4}$ are\qquad\ 
\begin{equation}
\alpha =\frac{x}{\sqrt{t}}~,~\beta =\frac{\rho }{\sqrt{t}}~,~\gamma =\frac{%
v+\rho \ln \left( t\right) }{\sqrt{t}}~,~\delta =y~,~w=u.  \label{WW.0a}
\end{equation}%
Choosing $\alpha ,\beta ,\gamma ,\delta $ as the independent variables and $%
w=w\left( \alpha ,\beta ,\gamma ,\delta \right) $ as the dependent variable
we find that the reduced PDE is%
\begin{equation}
_{N}\Delta w+\left( \frac{1}{2}\alpha w_{\alpha }+\frac{1}{2}\beta w_{\beta
}+\left( \frac{1}{2}\gamma -\beta \right) w_{\gamma }\right) =0.
\label{WH.08}
\end{equation}%
where $_{N}\Delta $ is the Laplace operator for the metric (\ref{WH.07b}).

Equation (\ref{WH.08}) is of the form (\ref{GPE.41}) with $%
A_{ij}=g_{ij}\left( x^{k}\right) $, $B^{i}\left( x^{k}\right) =\Gamma ^{i}+%
\frac{1}{2}\alpha \delta _{\alpha }^{i}+\frac{1}{2}\beta \delta _{\beta
}^{i}+\left( \frac{1}{2}\gamma -\beta \right) \delta _{\gamma }^{i},~f\left(
x^{k},u\right) =0,~$where $g_{ij}$ is the metric (\ref{WH.07b}). Replacing
in equations (\ref{GPE.42})-(\ref{GPE.46}) we obtain the Lie symmetry
conditions for (\ref{WH.08}). \ Because $A_{ij,u}=0$ it follows from
equation (\ref{GPE.44}) that the Lie point symmetries are generated from the
CKVs of the metric (\ref{WH.07b}). However taking into consideration the
rest of the symmetry conditions we find that the only Lie point symmetry
which remains is the one of the KV\ $X_{3}.$ We conclude that in this
reduction we do not have Type II\ hidden symmetries.

\subsection{Petrov type D}

The metric of the Petrov type D space-time is 
\begin{equation}
ds^{2}=-dx^{2}+x^{-\frac{2}{3}}dy^{2}-x^{\frac{4}{3}}\left( d\rho
^{2}+dz^{2}\right)  \label{WH.09b}
\end{equation}%
with Homothetic algebra%
\begin{eqnarray*}
K^{1} &=&\partial _{\rho }~,~K^{2}=\partial _{z}~,~K^{3}=\partial
_{y}~,~K^{4}=z\partial _{\rho }-\rho \partial _{z} \\
H &=&x\partial _{x}+\frac{4}{3}y\partial _{y}+\frac{z}{3}\partial _{z}+\frac{%
\rho }{3}\partial _{\rho }~~\left( \psi _{H}=1\right)
\end{eqnarray*}%
where $K^{1-4}$ are KVs and $H$ is a nongradient HV.

In this space-time the heat equation (\ref{WH.02}) takes the form: 
\begin{equation}
-u_{xx}+x^{\frac{2}{3}}u_{yy}-x^{-\frac{4}{3}}\left( u_{\rho \rho
}+u_{zz}\right) -\frac{1}{x}u_{x}-u_{t}=0.  \label{WH.09}
\end{equation}%
From\ Theorem\ \ref{The Lie of the heat equation} we have\ that the extra
Lie point symmetries are the vectors 
\begin{equation*}
X_{1-4}=K_{1-4}~,~~X_{5}=2t\partial _{t}+H.
\end{equation*}%
with nonzero commutators:%
\begin{eqnarray*}
\left[ X_{t},X_{5}\right] &=&2X_{t} \\
\left[ X_{1},X_{5}\right] &=&\frac{1}{3}X_{1}~,~\left[ X_{4},X_{1}\right]
=-X_{2} \\
\left[ X_{2},X_{4}\right] &=&X_{1}~,~\left[ X_{2},X_{5}\right] =\frac{1}{3}%
X_{2} \\
\left[ X_{3},X_{5}\right] &=&\frac{4}{3}X_{3}.
\end{eqnarray*}

We use $X_{5}$ to reduce the PDE because this is the Lie symmetry generated
by the HV. The zeroth order invariants of $X_{5}$ are%
\begin{equation*}
\alpha =\frac{x}{t^{\frac{1}{2}}}~,~\beta =\frac{y}{t^{\frac{2}{3}}}%
~,~\gamma =\frac{\rho }{t^{\frac{1}{6}}}~,~\delta =\frac{z}{t^{\frac{1}{6}}}%
~,~w=u.
\end{equation*}%
We choose $\alpha ,\beta ,\gamma ,\delta $ as the independent variables and $%
w=w\left( \alpha ,\beta ,\gamma ,\delta \right) $ as the dependent variable
and we find that the reduced PDE is%
\begin{equation}
_{D}\Delta w+\left( \frac{1}{2}aw_{\alpha }+\frac{2}{3}\beta w_{\beta }+%
\frac{1}{6}\gamma w_{\gamma }+\frac{1}{6}\delta w_{\delta }\right) =0
\label{WH.10a}
\end{equation}%
where~$_{D}\Delta $ is the Laplace operator with metric (\ref{WH.09b}).

Again working with the Lie symmetry conditions (\ref{GPE.42})-(\ref{GPE.46})
we find that equation (\ref{WH.10a}) admits as Lie point symmetry only the
vector $X_{4}$ which is an inherited symmetry. Hence we do not have Type II\
hidden symmetries.\ Obviously the Lie point symmetries $X_{1-4}$\ are Type
I\ hidden symmetries for equation (\ref{WH.10a})\ for the reduction by $%
X_{5}.$

\subsection{Petrov type II}

The metric of the Petrov type II space-time is 
\begin{equation}
ds^{2}=\rho ^{-\frac{1}{2}}\left( d\rho ^{2}+dz^{2}\right) -2\rho dxdy+\rho
\ln \rho ~dy^{2}  \label{WH.II1}
\end{equation}%
with Homothetic algebra%
\begin{eqnarray*}
K^{1} &=&\partial _{x}~,~K^{2}=\partial _{y}~,~K^{3}=\partial _{z}~ \\
H &=&\frac{1}{3}\left( x+2y\right) \partial _{x}+\frac{1}{3}y\partial _{y}+%
\frac{4}{3}z\partial _{z}+\frac{4}{3}\rho \partial _{\rho }~\left( \psi
_{H}=1\right)
\end{eqnarray*}%
where $K^{1-4}$ are KVs and $H$ is a nongradient HV.

In this space-time equation (\ref{WH.02}) takes the form:%
\begin{equation}
\rho ^{\frac{1}{2}}\left( u_{\rho \rho }+u_{zz}\right) -\frac{1}{\rho }%
\varepsilon \ln \rho u_{xx}-\frac{2}{\rho }u_{xy}+\rho ^{-\frac{1}{2}%
}u_{\rho }-u_{t}=0.
\end{equation}%
From Theorem \ref{The Lie of the heat equation} we have that the extra Lie
point symmetries are the vectors 
\begin{equation*}
X_{1-3}=K_{1-3}~,~~X_{4}=2t\partial _{t}+H
\end{equation*}%
with nonzero commutators:%
\begin{equation*}
\left[ X_{t},X_{4}\right] =2X_{t}
\end{equation*}%
\begin{eqnarray*}
\left[ X_{1},X_{4}\right] &=&\frac{1}{3}X_{1}~,~\left[ X_{3},X_{4}\right] =%
\frac{4}{3}X_{3} \\
\left[ X_{2},X_{4}\right] &=&\frac{2}{3}X_{1}+\frac{1}{3}X_{2}.
\end{eqnarray*}%
We use $X_{4}$ to reduce the PDE because this is the Lie symmetry generated
by the HV. The zeroth order invariants of $X_{4}$ are%
\begin{equation*}
\alpha =\frac{\rho }{t^{\frac{2}{3}}}~,~\beta =\frac{z}{t^{\frac{2}{3}}}%
~,~\gamma =\frac{x-\frac{1}{3}y\ln \left( t\right) }{t^{\frac{1}{6}}}%
~,~\delta =\frac{y}{t^{\frac{1}{6}}}~~,~w=u
\end{equation*}%
We choose $\alpha ,\beta ,\gamma ,\delta $ as the independent variables and $%
w=w\left( \alpha ,\beta ,\gamma ,\delta \right) $ as the dependent variable
and we find that the reduced PDE is%
\begin{equation}
_{II}\Delta w+\frac{2}{3}aw_{\alpha }+\frac{2}{3}\beta w_{\beta }+\left( 
\frac{1}{3}\delta -\frac{1}{6}\gamma \right) w_{\gamma }+\frac{1}{6}\delta
w_{\delta }=0  \label{WH.II}
\end{equation}%
where $_{II}\Delta $ is the Laplace operator for metric (\ref{WH.II1}).

From the Lie symmetry conditions follows that (\ref{WH.II}) does not admit
any Lie point symmetries. Hence we do not have Type II\ hidden symmetries in
this case.

\subsection{Petrov type III}

The metric of the Petrov type III space-time is 
\begin{equation}
ds^{2}=2d\rho dv+\frac{3}{2}xd\rho ^{2}+\frac{v^{2}}{x^{3}}\left(
dx^{2}+dy^{2}\right)  \label{WH.III}
\end{equation}%
with Homothetic algebra%
\begin{eqnarray*}
K^{1} &=&\partial _{\rho }~,~K^{2}=\partial _{y}~,~K^{3}=v\partial _{v}-\rho
\partial _{\rho }+2x\partial _{x}+2y\partial _{y} \\
H &=&v\partial _{v}+\rho \partial _{\rho }~\left( \psi _{H}=1\right)
\end{eqnarray*}%
where $K^{1-4}$ are KVs and $H$ is a nongradient HV.

In this space-time equation (\ref{WH.02}) takes the form:%
\begin{equation}
-\frac{3}{2}xu_{vv}+2u_{v\rho }+\frac{x^{3}}{v^{2}}\left(
u_{xx}+u_{yy}\right) -3\frac{x}{v}u_{v}+\frac{2}{v}u_{\rho }-u_{t}=0.
\end{equation}%
From\ Theorem\ \ref{The Lie of the heat equation} we have\ that the extra
Lie point symmetries are the vectors 
\begin{equation*}
X_{1-3}=K_{1-3}~,~~X_{4}=2t\partial _{t}+H
\end{equation*}%
with nonzero commutators:%
\begin{equation*}
\left[ X_{t},X_{4}\right] =2X_{t}~,~\left[ X_{2,},X_{3}\right] =2X_{2}
\end{equation*}%
\begin{equation*}
\left[ X_{3},X_{1}\right] =X_{1}~,~~\left[ X_{1},X_{4}\right] =X_{1}~
\end{equation*}%
We use $X_{4}$ for reduction because this is the Lie symmetry generated by
the HV.

The zeroth order invariants of $X_{4}$ are%
\begin{equation*}
\alpha =\frac{v}{\sqrt{t}}~,~\beta =\frac{\rho }{\sqrt{t}}~,~\gamma
=x~,~\delta =y~,~w=u
\end{equation*}%
We choose $w=w\left( \alpha ,\beta ,\gamma ,\delta \right) $ as the
dependent variable and we find that the reduced PDE is%
\begin{equation}
_{III}\Delta w+\frac{\alpha }{2}w_{\alpha }+\frac{\beta }{2}w_{\beta }=0
\label{WH.III1}
\end{equation}%
where $_{III}\Delta $ is the Laplace operator for metric (\ref{WH.III}).

From the Lie symmetry conditions we find that \ (\ref{WH.III1}) admits only
the Lie point symmetries ~$X_{2},X_{3}.$ Therefore\ we do not have type II
hidden symmetries and the symmetries \ $X_{1},X_{4}~$ are Type I\ hidden
symmetries.

\section{Conclusion}

\label{Conclusion}

We have discussed the reduction of the homogeneous heat equation in certain
general classes of Riemannian spaces which admit some type of basic symmetry
and we have determined in each case the Type II\ hidden symmetries. These
spaces are the spaces which admit a gradient KV or a gradient HV and finally
space-times which admit a HV\ which acts simply and transitively.

In general the reduction of the homogeneous heat equation in these spaces
leads to reduced equations in which the second order partial derivatives are
within a Laplace operator, that is, the reduced equations are of the form $%
\Delta u=F\left( t,x^{i},u_{i},u_{t}\right) $ where $F$ is linear on $%
u_{i},~u_{t}.$ This implies that the Lie point symmetries of the reduced
equation will be generated from the elements of the conformal algebra of the
metric which defines the Laplace operator $\Delta $. It is from these
symmetries that the Type II\ hidden symmetries will emerge. Summarizing we
have found the following general geometric results:

\begin{itemize}
\item If we reduce the heat equation (\ref{WH.02}) via the symmetries which
are generated by a gradient KV~$\left( S^{,i}\right) $ the reduced equation
is a heat equation in the nondecomposable space. In this case we have the
Type II hidden symmetry $\partial _{t}-\frac{1}{2t}w\partial _{w}$ provided
we reduce the heat equation with the symmetry~$tS^{,i}-\frac{1}{2}Su\partial
_{u}$.

\item If we reduce the heat equation (\ref{WH.02}) via the symmetries which
are generated by a gradient HV the reduced equation is Laplace equation for
an appropriate metric. In this case the Type II hidden symmetries are
generated from the proper CKVs.

\item In Petrov spacetimes the reduction of the heat equation (\ref{WH.02})
via the symmetry generated from the nongradient HV gives PDEs which inherit
the Lie symmetries, hence no Type II hidden symmetries are admitted.
\end{itemize}

The results we have obtained can be used in many important space-times and
help facilitate the solution of the heat equation in these space-times.
Finally we mention that the results we have obtained have been checked with
the libraries SADE \cite{SADE} and PDEtools \cite{PDEtools} of Maple%
\footnote{%
http://www.maplesoft.com/}.

\appendix

\section{Appendix}

\label{Appendix}

\begin{proof}[Proof of Corollary \protect\ref{Cor1}]
Using Theorem \ref{The Lie of the heat equation with flux} and replacing $q$
we have

For case a) 
\begin{eqnarray}
-a_{\tau }w+H\left( b\right) -\frac{1}{2\tau }\left( aw+b\right) +\frac{a}{%
2\tau }w-\left( \psi c_{2}w+\frac{1}{2\tau }wc_{1}\right) _{\tau } &=&0. \\
-a_{\tau }w+H\left( b\right) -\frac{1}{2\tau }b+\frac{1}{2\tau ^{2}}wc_{1}
&=&0 \\
\left[ -a_{\tau }+\frac{c_{1}}{2\tau ^{2}}\right] w+\left[ H\left( b\right) -%
\frac{1}{2\tau }b\right] &=&0
\end{eqnarray}%
that is%
\begin{equation}
a=-\frac{c_{1}}{2\tau }+a_{0}~,~~H\left( b\right) -\frac{1}{2\tau }b=0
\end{equation}

For case b)%
\begin{equation}
0=\left( -\frac{1}{2}T_{,\tau }\psi +\frac{1}{2}T_{,\tau \tau }S-F_{,\tau
}\right) w-\left( 2\psi q\int Td\tau \right) _{\tau }-Tq_{,i}S^{,i}.  \notag
\end{equation}%
then\qquad 
\begin{equation}
0=\left( -\frac{1}{2}T_{,\tau }\psi +\frac{1}{2}T_{,\tau \tau }S-F_{,\tau
}\right) w+\frac{\psi }{\tau ^{2}}\int Td\tau ~w-\frac{\psi }{\tau }Tw
\end{equation}%
from here we have%
\begin{equation}
T_{,\tau \tau }=0\rightarrow ~T=T_{0}\tau +T_{1}
\end{equation}%
and%
\begin{equation*}
F=-T_{0}\psi \tau .
\end{equation*}
\end{proof}

\newpage

\end{document}